\date{}
\begin{document}

\centerline{\Large{\bf Groups of order 64 are determined}} 

\centerline{} 

\centerline{\Large{\bf by their Tables of Marks}} 

\centerline{} 

\centerline{\bf {Peter Bonart}} 

\centerline{} 

\centerline{University of Regensburg} 

\centerline{Faculty of Mathematics}

\centerline{e-mail: peterjbonart@gmail.com}

\newtheorem{Theorem}{\quad Theorem}[section] 

\newtheorem{Definition}[Theorem]{\quad Definition} 

\newtheorem{Corollary}[Theorem]{\quad Corollary} 

\newtheorem{Lemma}[Theorem]{\quad Lemma} 

\newtheorem{Example}[Theorem]{\quad Example} 

\centerline{}

\begin{abstract} This paper shows that groups of order $64$ are uniquely determined up to isomorphism by their Tables of Marks. This then resolves a previously posed question about whether all groups of order less than $96$ are determined by their Tables of Marks.

\end{abstract} 

{\bf Mathematics Subject Classification:} 19A22 \\

{\bf Keywords:} table of marks, groups of order 64

\section{Introduction}
In his study of finite groups William Burnside introduced a group invariant called the Table of Marks. \cite{burnside1955theory}\\
It was then found that non-isomorphic groups can have the same Table of Marks. \cite{theevenaz1988isomorphic}.\\
A particularly small example of two groups with isomorphic Tables of Marks was given by the two groups of order $96$ constructed by Raggi-C\'ardens and Valero-Elizondo in 2009. \cite{raggi2009two}\\
In a subsequent paper they raised the question whether these groups are minimal, that is, whether there are any non-isomorphic groups of order strictly less than $96$ with isomorphic Tables of Marks. \cite{martinez2014minimal}\\
Until now it had only been shown, that if two such groups exist, then they must have $64$ elements each.\cite{maldonado2016groups}.\\
In this paper we will solve the remaining case, and show that groups of order $64$ with isomorphic Tables of Marks are already isomorphic groups.\\
Combined with the other papers just mentioned, it then follows that all groups of order less than $96$ are determined by their Tables of Marks. The non-isomorphic groups of order $96$ with isomorphic Tables of Marks are minimal. This answers the question from \cite{martinez2014minimal}.\\

In section \ref{ToMSection} the definition of the Table of Marks is given.\\
In section \ref{MultisetSection} we use multisets to define certain permutation invariants for matrices. We define the multiset of rows and multiset of columns of a matrix, and show that isomorphic Tables of Marks have the same multisets of rows and columns.\\
In section \ref{24section} we distinguish the Tables of Marks of all but $5$ pairs of groups of order $64$ simply by counting how often each integer $n$ appears as an entry in the Tables of Marks.\\
In section \ref{FinalSection} we use the more subtle invariants from section \ref{MultisetSection} to also tell the remaining $5$ pairs apart.\\

Throughout this paper we use the computer algebra system GAP \cite{gap2008gap}. All the GAP programs we wrote for this paper can be found online at \cite{pgap64}.\\

{\bf Acknowledgements.} 
The results of this paper were discovered as part of my Bachelorthesis at the university of Regensburg under the supervision of Niko Naumann.

\section{The Table of Marks}\label{ToMSection}

\begin{Definition}
	Let $G$ be a finite group.\\
	Let $U_1,...,U_n$ be representatives of the conjugacy classes of subgroups of $G$, with $|U_1|\leq |U_2| \leq ... \leq |U_n|$\\
	Write $\mathrm{Fix}_{U_i}(G/U_j) := \{x \in G/U_j | \forall h \in U_i : h \cdot x = x \}$ for the fixpoint set of $U_i$ on $G/U_j$.\\
	Then the matrix $\mathrm{ToM}(G) := (|\mathrm{Fix}_{U_i}(G/U_j)|)_{i,j=1,...,n}$ is called the \underline{\smash{Table of Marks}} of $G$.\\
	Let $\mathscr C(G)$ denote the set of conjugacy classes of subgroups of $G$ and let $Q$ be another finite group. A function $\phi : \mathscr C(G) \rightarrow \mathscr C(Q)$ is called an\\
	\underline{\smash{isomorphism of Tables of Marks from $\mathrm{ToM}(G)$ to $\mathrm{ToM}(Q)$}} if $\phi$ is bijective and for all $i,j$ and for all $V_i \in \phi([U_i]), V_j \in \phi([U_j])$ the equality $|\mathrm{Fix}_{U_i}(G/U_j)| = |\mathrm{Fix}_{V_i}(Q/V_j)|$ holds.
\end{Definition}
The Table of Marks was first introduced in \cite{solomon1967burnside} based on ideas from \cite{burnside1955theory}.\\
Note that the matrix $\mathrm{ToM}(G)$ depends not only on $G$ but also on the ordering of the $U_i$. But for any given group $G$ and any two orderings of the conjugacy classes of subgroups of $G$ the resulting two Tables of Marks of $G$ will be isomorphic Tables of Marks.\\

We will make use of the following obvious fact:\\
If $A = (a_{i,j})_{i,j=1}^n$ and $B = (b_{i,j})_{i,j=1}^n$ are two matrices which are the Tables of Marks of two groups $A = ToM(G), B = ToM(H)$, then $A, B$ are isomorphic Tables of Marks if and only if there is a permutation $\pi \in S_n$ such that $a_{i,j} = b_{\pi(i),\pi(j)}$ for all $i,j$.\\

\section{Multiset of Rows and Columns} \label{MultisetSection}
A convenient language to define objects invariant under permutations is the language of multisets.\\
An overview of the theory of multisets can be found at \cite{blizard1988multiset}.\\
In this section we use multisets to define two invariants of matrices under permutations of rows and columns.
\begin{Definition} 
	A \underline{\smash{multiset}} is a pair $(S,m)$ consisting of a set $S$ called the \underline{\smash{underlying set}} and a function $m : S \rightarrow \mathbb{N}^{\geq 1}$ called the \underline{\smash{multiplicity function}}.\\
	We say that a multiset $(S,m)$ has entries in a set $X$ if $S \subseteq X$.\\
	$PM(X)$ will denote the set of all multisets with entries in $X$.\\
	$PM(X) = \{(S,m)|S \subseteq X $ is a subset $ , m : S \rightarrow \mathbb{N}^{\geq 1} $ is a function$\}$\\
	If $a_1,...,a_n$ are elements of a set $X$ then $[a_1,...,a_n] \in PM(X)$ will denote the multiset, whose underlying set is $\{a_1,...,a_n\}$ and where the multiplicity is defined by $m(a_i) = |\{j \in \{1,...,n\} |a_i = a_j\}|$.\\
	If $T = (a_1,...,a_n)$ is an $n$-tuple of elements from a set $X$, then we also write $\mathrm{MS}(T)$ for $[a_1,...,a_n]$.\\
	
	If $A = (a_{i,j})_{i,j = 1}^n$ is a matrix with entries in a field $k$, and $R^i := (a_{i,1},...,a_{i,n})$ is the $i$-th row of $A$, then we define the \underline{\smash{multiset of rows}} of $A$ to be $\mathrm{Rows}(A) := [\mathrm{MS}(R^1),...,\mathrm{MS}(R^n)] \in PM(PM(k))$, i.e. it is the multiset of all rows of $A$ where the rows are themselves regarded as multisets.
	Similarly if $C^i := (a_{1,i},...,a_{n,i})$ denotes the $i$-th column, we define the \underline{\smash{multiset of columns}}\\ $\mathrm{Columns}(A) := [\mathrm{MS}(C^1),...,\mathrm{MS}(C^n)]$.\\
	
	For the matrix $A$ we also define the \underline{\smash{multiset of entries}} $\mathrm{Entries}(A)$ to be the multiset whose underlying set is $\{a_{i,j}|i,j\in\{1,...,n\} \}$, where the multiplicity is defined by $m(a_{i,j}) = |\{k,l\in \{1,..,n\}|a_{i,j} = a_{k,l} \}|$ 
\end{Definition}
Note that applying a permutation $\pi \in S_n$ to a multiset $[a_1,...,a_n]$ does not change it. $[a_1,...,a_n] = [a_{\pi(1)},...,a_{\pi(n)}]$.\\

In our GAP code we implemented multisets in the following way:\\
A multiset is a pair "[elements, multiplicities]" consisting of two lists "elements" and "multiplicities". The first list "elements" can contain any kind of object of GAP, and is supposed to represent the underlying set. The second list "multiplicities" only contains integers. Both lists have the same length.\\
The $i$-th entry of the second list is supposed to represent the multiplicity of the $i$-th entry of the first list.\\
To compare the equality of two multisets [elements1, multiplicites1] and [elements2, multiplicities2] we check whether the lists "elements1" and "elements2" have the same length, and whether for each object x in "elements1" x is also contained in "elements2" and the corresponding multiplicities match.\\

\begin{Example}
\textnormal{
The matrix
$A = \begin{pmatrix}
1 & 2\\
1 & 2
\end{pmatrix}$
has the same multiset of rows as
$B = \begin{pmatrix}
2 & 1\\
1 & 2
\end{pmatrix}$
 because both matrices have $2$ rows containing exactly one $1$ and one $2$ and both have no other rows.\\
 $\mathrm{Rows}(A) = [[1,2][1,2]] = [[2,1][1,2]] = \mathrm{Rows}(B)$\\
 However the matrices have different multisets of columns, because the first matrix has a column containing two $1$s, while the second matrix has no such column.\\
 $\mathrm{Columns}(A) = [[1,1][2,2]] \neq [[2,1][1,2]] = \mathrm{Columns}(B)$\\
 The two matrices also have the same multisets of entries, because they both contain two 1s and two 2s.\\
 $\mathrm{Entries}(A) = [1,2,1,2] = [2,1,1,2] = \mathrm{Entries}(B)$
}
\end{Example}

We now show that permutations of rows and columns leaves the multiset of rows and columns invariant.\\
\begin{Lemma} \label{MsetRowInvariantLemma}
	Let $A = (a_{i,j})_{i,j=1}^n$ be a matrix and $\pi, \sigma \in S_n$ two permutations. Let $B = (a_{\pi(i),\sigma(j)})_{i,j=1}^n$ be the matrix obtained by applying $\pi$ to the rows and $\sigma$ to the columns of $A$.\\
	Then $\mathrm{Rows}(A) = \mathrm{Rows}(B)$ and $\mathrm{Columns}(A) = \mathrm{Columns}(B)$.
\end{Lemma}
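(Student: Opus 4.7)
The plan is to reduce the lemma to the observation (already noted in the text just before the lemma) that applying any permutation to the entries of a multiset $[a_1,\dots,a_n]$ leaves it unchanged. This observation will be applied twice, once at the ``inner'' level (entries within a single row/column) and once at the ``outer'' level (the list of rows/columns themselves).

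First I would handle the multiset of rows. Let $R^i = (a_{i,1},\dots,a_{i,n})$ denote the $i$-th row of $A$ and let $S^i$ denote the $i$-th row of $B$, so $S^i = (a_{\pi(i),\sigma(1)},\dots,a_{\pi(i),\sigma(n)})$. At the inner level, observe that the tuple $S^i$ is obtained from $R^{\pi(i)} = (a_{\pi(i),1},\dots,a_{\pi(i),n})$ by permuting its entries according to $\sigma$; hence, by the invariance noted above,
\[
  \mathrm{MS}(S^i) \;=\; \mathrm{MS}(R^{\pi(i)}).
\]
Now at the outer level,
\[
  \mathrm{Rows}(B) \;=\; [\mathrm{MS}(S^1),\dots,\mathrm{MS}(S^n)] \;=\; [\mathrm{MS}(R^{\pi(1)}),\dots,\mathrm{MS}(R^{\pi(n)})],
\]
and the right-hand side is obtained from $[\mathrm{MS}(R^1),\dots,\mathrm{MS}(R^n)] = \mathrm{Rows}(A)$ by permuting its entries according to $\pi$, so it again equals $\mathrm{Rows}(A)$ by the same invariance.

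The multiset of columns is handled by a symmetric argument: the $j$-th column of $B$ is $(a_{\pi(1),\sigma(j)},\dots,a_{\pi(n),\sigma(j)})$, which is the $\sigma(j)$-th column of $A$ permuted by $\pi$ at the inner level; then the list of all columns of $B$ is the list of columns of $A$ permuted by $\sigma$ at the outer level. Both permutations vanish when passing to multisets.

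There is no real obstacle here; the whole argument is bookkeeping around the definition of a multiset. The only mildly delicate point is keeping the two layers (multiset of entries inside a row versus multiset of rows) notationally distinct, so that it is clear which permutation ($\pi$ or $\sigma$) is being absorbed at which level. Once that is set up, the proof is a direct double application of the trivial fact $[a_1,\dots,a_n] = [a_{\tau(1)},\dots,a_{\tau(n)}]$ for any $\tau \in S_n$.
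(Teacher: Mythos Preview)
Your proof is correct and follows essentially the same approach as the paper: applying the permutation-invariance of multisets once at the inner level (entries within a row) and once at the outer level (the list of rows). The only cosmetic difference is that the paper introduces an intermediate matrix $C=(a_{i,\sigma(j)})$ to separate the two steps explicitly, whereas you handle both permutations directly on $B$.
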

\begin{proof}
	Let $C = (a_{i,\sigma(j)})_{i,j=1}^n$ be the matrix obtained by only applying $\sigma$ to the columns of $A$.\\
	Show that $\mathrm{Rows}(A) = \mathrm{Rows}(C) = \mathrm{Rows}(B)$.\\
	Take $i \in \{1,...,n\}$. Let $R_A^i$ be the $i$-th row of $A$ and $R_C^i$ be the $i$-th row of $C$.\\
	Then $\mathrm{MS}(R_A^i) = [a_{i,1},...,a_{i,n}] \overset{\text{apply }\sigma}{=} [a_{i,\sigma(1)},...,a_{i,\sigma(n)}] = \mathrm{MS}(R_C^i)$ so\\
	$\mathrm{Rows}(A) = [\mathrm{MS}(R_A^1),...,\mathrm{MS}(R_A^n)] = [\mathrm{MS}(R_C^1),...,\mathrm{MS}(R_C^n)] = \mathrm{Rows}(C)$.\\
	If furthermore $R_B^i$ denotes the $i$-th row of $B$, then we have an equality of ordered tuples $R_B^i = R_C^{\pi(i)}$. In particular $\mathrm{MS}(R_B^i) = \mathrm{MS}(R_C^{\pi(i)})$.\\
	Then $\mathrm{Rows}(C) = [\mathrm{MS}(R_C^1),...,\mathrm{MS}(R_C^n)] \overset{\text{apply } \pi}{=} [\mathrm{MS}(R_C^{\pi(1)}),..., \mathrm{MS}(R_C^{\pi(n)})] = \newline [\mathrm{MS}(R_B^1),...,\mathrm{MS}(R_B^n)] = \mathrm{Rows}(B)$.\\
	So $\mathrm{Rows}(A) = \mathrm{Rows}(C) = \mathrm{Rows}(B)$.\\
	$\mathrm{Columns}(A) = \mathrm{Columns}(B)$ follows similarly.
\end{proof}

So if $G, H$ are groups, and $\mathrm{ToM}(G)$ and $\mathrm{ToM}(H)$ are isomorphic Tables of Marks, then $\mathrm{Rows}(\mathrm{ToM}(G)) = \mathrm{Rows}(\mathrm{ToM}(H))$ and $\mathrm{Columns}(\mathrm{ToM}(G)) = \mathrm{Columns}(\mathrm{ToM}(H))$.

We will use these invariants in section \ref{FinalSection}.

Also it is clear that isomorphic Tables of Marks have the same multiset of entries. We will use that fact in the next section.\\

\section{Reducing to 5 pairs} \label{24section}
In this section we will calculate for each group of order $64$ the multiset of entries of its Table of Marks.\\
We will find that there are only $5$ pairs of distinct groups of order $64$ whose Tables of Marks have the same multiset of entries.\\
In particular all other pairs of groups of order $64$ have non-isomorphic Tables of Marks.\\

Most of the work in this section will be done by GAP programs.\\

GAP provides a list of all groups of order $64$ called "AllSmallGroups(64)". We will from now on refer to groups of order $64$ by their position in that list.
So when we for example talk about "group $15$" then that refers to the group that GAP returns when you type in "AllSmallGroups(64)[15]".\\
If $G$ is a group of order $64$ in GAP, then you can find out which number is associated to it by typing into GAP "Position(AllSmallGroups(64),$G$);"\\

In reference \cite{pgap64} you can find a program called "PrintEntryTable.g".\\
After loading that program into GAP you can call the function "printEntriesLatex(64)". It then will calculate for each group of order $64$ the multiset of entries of its Table of Marks and then output the latex code of a table showing that data.\\
The table it outputs was not included in this paper because it was too long, but you can find it in "entryTable.pdf" at \cite{pgap64}, or contact the author.\\

You can then look which pairs of groups have the exact same entries in the table.\\
Since manually comparing all pairs of the $267$ groups of order $64$ is too tedious, we additionally wrote a program called "find5Pairs.g" at \cite{pgap64} that does that for us.

If you load that program into GAP and call the function "findEqualToms(64)" then the program will search for all pairs of distinct groups of order $64$ which have the same multiset of entries. It will then return a list of all such pairs of groups.\\

The only pairs of groups whose Tables of Marks have the same multisets of entries are the groups with the numbers\\
$15$ and $16$\\
$47$ and $48$\\
$106$ and $107$\\
$179$ and $181$\\
$236$ and $240$\\

All other pairs of groups have Tables of Marks with different multisets of entries, which means that the Tables of Marks are not isomorphic.\\

So to show that groups of order $64$ are determined by their Tables of Marks we now just need to show, that these $5$ pairs of groups have non-isomorphic Tables of Marks.

\section{The remaining 5 pairs} \label{FinalSection}

Section \ref{24section} proves that all groups of order $64$, except possibly those with group numbers $15, 16, 47, 48, 106, 107, 179, 181, 236$ and $240$, are uniquely determined by their Tables of Marks.\\
We now just need to show that these five pairs of groups have non-isomorphic Tables of Marks. To do this we will calculate the multisets of rows and columns of their Tables of Marks. By the results of Section \ref{MultisetSection}, if the multiset of rows or the multiset of columns of two Tables of Marks differ, then these Tables of Marks are not isomorphic.\\

The tables  below show the multiset of columns of the remaining groups.\\
Each table shows the multiset of columns of two groups, and allows to compare them easily.\\
In each table, in each line, the left side of the doubleline $\parallel$ specifies a column-multiset, and the right side tells you how many times that type of column occurs in the Tables of Marks of the two groups.
A column-multiset is specified by the amount of $1$s, $2$s, $4$s, $8$s, $16$s, $32$s and $64$s that appear in it. The Table of Marks of the groups in question contain no other non-zero entries.\\
If two numbers right to the doubleline differ, then we underline them.\\
If there is one line in which the two numbers to the right of the doubleline differ, then the two groups have Tables of Marks with different multisets of columns.\\

For example you can read from the first Table below, that there are $3$ columns of the Table of Marks of group $15$ that contain exactly one $16$, two $8$s, three $4$s, three $2$s and one $1$, but that there is only one such column in the Table of Marks of group $16$. This demonstrates that the Tables of Marks of group $15$ and $16$ are not isomorphic.\\

These tables were printed out by the GAP program called "PrintMultisetOfColumnsTable.g" using a method called "PrintMultisetOfColumnsTable()". (\cite{pgap64})\\
The method takes in a pair of two groups, and a boolean value called "printColumnsNotRows" telling it whether to calculate row or column multisets.\\
For example the first table below was printed by passing the method a pair consisting of group 15 and group 16, and by setting the boolean value to true.

\small
\begin{longtable}{c|c|c|c|c|c|c||c|c}
	\#1 & \#2 & \#4 & \#8 & \#16 & \#32 & \#64 & Columns in $G15$ & Columns in $G16$\\
	1 &3 &5 &8 &6 &3 &1 &1 &1\\
	1 &3 &5 &5 &3 &1 &0 &1 &1\\
	1 &3 &5 &6 &3 &1 &0 &1 &1\\
	1 &3 &5 &3 &2 &1 &0 &1 &1\\
	1 &3 &5 &3 &1 &0 &0 &1 &1\\
	1 &3 &3 &3 &1 &0 &0 &\underline{1} & \underline{2}\\
	1 &3 &3 &2 &1 &0 &0 &\underline{3} & \underline{1}\\
	1 &3 &1 &2 &0 &0 &0 &1 &1\\
	1 &3 &3 &1 &0 &0 &0 &2 &2\\
	1 &3 &1 &1 &0 &0 &0 &1 &1\\
	1 &3 &0 &0 &0 &0 &0 &2 &2\\
	1 &1 &2 &0 &0 &0 &0 &1 &1\\
	1 &1 &1 &1 &0 &0 &0 &2 &2\\
	1 &3 &1 &0 &0 &0 &0 &1 &1\\
	1 &1 &1 &0 &0 &0 &0 &2 &2\\
	1 &2 &0 &0 &0 &0 &0 &2 &2\\
	1 &1 &0 &0 &0 &0 &0 &3 &3\\
	1 &0 &0 &0 &0 &0 &0 &1 &1\\
	1 &3 &3 &1 &1 &0 &0 &\underline{0} & \underline{1}\\
	
\end{longtable}
\begin{longtable}{c|c|c|c|c|c|c||c|c}
	\#1 & \#2 & \#4 & \#8 & \#16 & \#32 & \#64 & Columns in $G47$ & Columns in $G48$\\
	1 &3 &5 &5 &5 &3 &1 &1 &1\\
	1 &3 &5 &5 &3 &1 &0 &1 &1\\
	1 &3 &3 &3 &1 &1 &0 &\underline{1} & \underline{0}\\
	1 &3 &3 &3 &3 &1 &0 &\underline{1} & \underline{0}\\
	1 &3 &3 &1 &1 &0 &0 &1 &1\\
	1 &3 &3 &3 &1 &0 &0 &1 &1\\
	1 &3 &5 &3 &1 &0 &0 &1 &1\\
	1 &4 &0 &0 &0 &0 &0 &2 &2\\
	1 &3 &3 &1 &0 &0 &0 &2 &2\\
	1 &3 &0 &0 &0 &0 &0 &2 &2\\
	1 &3 &1 &1 &0 &0 &0 &1 &1\\
	1 &2 &0 &0 &0 &0 &0 &2 &2\\
	1 &3 &1 &0 &0 &0 &0 &1 &1\\
	1 &1 &1 &0 &0 &0 &0 &2 &2\\
	1 &1 &0 &0 &0 &0 &0 &3 &3\\
	1 &0 &0 &0 &0 &0 &0 &1 &1\\
	1 &3 &3 &3 &2 &1 &0 &\underline{0} & \underline{2}\\

\end{longtable}
\begin{longtable}{c|c|c|c|c|c|c||c|c}
	\#1 & \#2 & \#4 & \#8 & \#16 & \#32 & \#64 & Columns in $G106$ & Columns in $G107$\\
1 &7 &15 &19 &15 &7 &1 &1 &1\\
1 &7 &15 &15 &7 &1 &0 &1 &1\\
1 &3 &5 &5 &3 &1 &0 &4 &4\\
1 &7 &11 &9 &5 &1 &0 &\underline{2} & \underline{0}\\
1 &3 &5 &3 &1 &0 &0 &4 &4\\
1 &7 &11 &7 &1 &0 &0 &2 &2\\
1 &3 &3 &2 &1 &0 &0 &\underline{4} & \underline{0}\\
1 &7 &3 &0 &0 &0 &0 &4 &4\\
1 &7 &7 &3 &1 &0 &0 &1 &1\\
1 &7 &7 &1 &0 &0 &0 &1 &1\\
1 &5 &2 &0 &0 &0 &0 &4 &4\\
1 &3 &3 &1 &0 &0 &0 &8 &8\\
1 &3 &1 &1 &0 &0 &0 &2 &2\\
1 &3 &0 &0 &0 &0 &0 &4 &4\\
1 &3 &1 &0 &0 &0 &0 &7 &7\\
1 &1 &1 &0 &0 &0 &0 &4 &4\\
1 &2 &0 &0 &0 &0 &0 &4 &4\\
1 &1 &0 &0 &0 &0 &0 &7 &7\\
1 &0 &0 &0 &0 &0 &0 &1 &1\\
1 &7 &11 &11 &7 &1 &0 &\underline{0} & \underline{1}\\
1 &7 &11 &7 &3 &1 &0 &\underline{0} & \underline{1}\\
1 &3 &3 &1 &1 &0 &0 &\underline{0} & \underline{2}\\
1 &3 &3 &3 &1 &0 &0 &\underline{0} & \underline{2}\\

\end{longtable}
\begin{longtable}{c|c|c|c|c|c|c||c|c}
	\#1 & \#2 & \#4 & \#8 & \#16 & \#32 & \#64 & Columns in $G179$ & Columns in $G181$\\
1 &7 &11 &15 &11 &3 &1 &1 &1\\
1 &7 &11 &11 &3 &1 &0 &1 &1\\
1 &7 &11 &9 &5 &1 &0 &\underline{2} & \underline{0}\\
1 &7 &7 &3 &1 &0 &0 &2 &2\\
1 &7 &11 &7 &1 &0 &0 &1 &1\\
1 &7 &3 &0 &0 &0 &0 &4 &4\\
1 &3 &3 &2 &1 &0 &0 &\underline{4} & \underline{0}\\
1 &3 &3 &1 &0 &0 &0 &2 &2\\
1 &7 &7 &1 &0 &0 &0 &1 &1\\
1 &5 &2 &0 &0 &0 &0 &4 &4\\
1 &3 &1 &1 &0 &0 &0 &4 &4\\
1 &3 &0 &0 &0 &0 &0 &4 &4\\
1 &3 &1 &0 &0 &0 &0 &7 &7\\
1 &2 &0 &0 &0 &0 &0 &4 &4\\
1 &1 &0 &0 &0 &0 &0 &7 &7\\
1 &0 &0 &0 &0 &0 &0 &1 &1\\
1 &7 &11 &11 &7 &1 &0 &\underline{0} & \underline{1}\\
1 &7 &11 &7 &3 &1 &0 &\underline{0} & \underline{1}\\
1 &3 &3 &1 &1 &0 &0 &\underline{0} & \underline{2}\\
1 &3 &3 &3 &1 &0 &0 &\underline{0} & \underline{2}\\

\end{longtable}

For the last pair the multisets of columns happen to agree:
\begin{longtable}{c|c|c|c|c|c|c||c|c}
	\#1 & \#2 & \#4 & \#8 & \#16 & \#32 & \#64 & Columns in $G236$ & Columns in $G240$\\
	1 &15 &35 &33 &26 &7 &1 &1 &1\\
	1 &15 &35 &21 &9 &1 &0 &2 &2\\
	1 &15 &35 &21 &10 &1 &0 &1 &1\\
	1 &7 &13 &5 &0 &0 &0 &4 &4\\
	1 &7 &9 &2 &0 &0 &0 &21 &21\\
	1 &7 &7 &4 &1 &0 &0 &4 &4\\
	1 &15 &35 &15 &1 &0 &0 &1 &1\\
	1 &7 &7 &1 &0 &0 &0 &15 &15\\
	1 &3 &2 &0 &0 &0 &0 &12 &12\\
	1 &4 &1 &0 &0 &0 &0 &6 &6\\
	1 &3 &1 &0 &0 &0 &0 &35 &35\\
	1 &1 &0 &0 &0 &0 &0 &15 &15\\
	1 &0 &0 &0 &0 &0 &0 &1 &1\\
	
\end{longtable}

\normalsize

The only pair of groups whose multisets of columns are equal are the groups $236$ and $240$. So to prove that groups of order $64$ are determined by their Tables of Marks, the only thing left to do, is to show that group $236$ and group $240$ have non-isomorphic Tables of Marks.\\
We now calculate the multiset of rows of these two groups and will see that they are different multisets.\\

\small
\begin{longtable}{c|c|c|c|c|c|c||c|c}
	\#1 & \#2 & \#4 & \#8 & \#16 & \#32 & \#64 & Rows in $G236$ & Rows in $G240$\\
	0 &0 &0 &0 &0 &0 &1 &1 &1\\
	0 &0 &0 &0 &0 &2 &0 &3 &3\\
	0 &0 &0 &1 &0 &1 &0 &4 &4\\
	0 &0 &0 &2 &2 &0 &0 &12 &12\\
	0 &0 &0 &0 &3 &0 &0 &4 &4\\
	0 &0 &0 &1 &2 &0 &0 &9 &9\\
	0 &0 &0 &0 &5 &0 &0 &1 &1\\
	0 &0 &0 &7 &0 &0 &0 &9 &9\\
	0 &0 &4 &3 &0 &0 &0 &\underline{8} & \underline{0}\\
	0 &1 &5 &2 &0 &0 &0 &\underline{4} & \underline{2}\\
	0 &0 &0 &10 &0 &0 &0 &4 &4\\
	0 &1 &3 &2 &0 &0 &0 &\underline{2} & \underline{0}\\
	0 &0 &0 &8 &0 &0 &0 &2 &2\\
	0 &0 &5 &3 &0 &0 &0 &\underline{4} & \underline{8}\\
	0 &0 &18 &0 &0 &0 &0 &\underline{4} & \underline{0}\\
	0 &0 &14 &0 &0 &0 &0 &1 &1\\
	0 &0 &15 &0 &0 &0 &0 &\underline{12} & \underline{14}\\
	0 &0 &13 &0 &0 &0 &0 &\underline{8} & \underline{6}\\
	0 &0 &12 &0 &0 &0 &0 &4 &4\\
	0 &0 &19 &0 &0 &0 &0 &\underline{4} & \underline{2}\\
	0 &0 &21 &0 &0 &0 &0 &\underline{2} & \underline{4}\\
	0 &35 &0 &0 &0 &0 &0 &4 &4\\
	0 &33 &0 &0 &0 &0 &0 &\underline{4} & \underline{0}\\
	0 &41 &0 &0 &0 &0 &0 &\underline{2} & \underline{0}\\
	0 &43 &0 &0 &0 &0 &0 &\underline{4} & \underline{0}\\
	0 &31 &0 &0 &0 &0 &0 &\underline{1} & \underline{2}\\
	118 &0 &0 &0 &0 &0 &0 &1 &1\\
	0 &0 &3 &3 &0 &0 &0 &\underline{0} & \underline{4}\\
	0 &1 &4 &2 &0 &0 &0 &\underline{0} & \underline{4}\\
	0 &0 &16 &0 &0 &0 &0 &\underline{0} & \underline{4}\\
	0 &51 &0 &0 &0 &0 &0 &\underline{0} & \underline{1}\\
	0 &38 &0 &0 &0 &0 &0 &\underline{0} & \underline{8}\\
\end{longtable}
\normalsize

As we can see the multiset of rows of the Tables of Marks of the groups $236$ and $240$ are non-isomorphic.\\
And this concludes the proof, that groups of order $64$ are determined by their Table of Marks.

\bibliographystyle{plain}

\newpage

\end{document}